\documentclass[12pt]{article}

\usepackage{graphicx}
\usepackage[a4paper, hmargin={2.5cm,2.5cm},vmargin={3.2cm,3.2cm}]{geometry}
\usepackage[font=small,labelfont=bf,width=0.9\textwidth]{caption} 

\usepackage[T1]{fontenc}
\usepackage[utf8]{inputenc}
\usepackage[english]{babel}

\usepackage{amsmath,amssymb,amsfonts}
\usepackage{mathrsfs}
\usepackage{amsthm,latexsym}
\usepackage{etoolbox}
\usepackage{extarrows}

\usepackage{cite}
\usepackage{paralist}
\setlength{\pltopsep}{.4em}
\setlength{\plpartopsep}{.2em}
\setlength{\plitemsep}{.1em}
\setlength{\plparsep}{.05em}
\usepackage{tikz}
\usetikzlibrary{shapes}
\usepackage{pgfplots}
\pgfplotsset{compat=newest}
\usepgflibrary{plotmarks}

\usepackage{filecontents}

\begin{filecontents*}{even.dat}
a,b
3.181980514,1.627833531e-12 
.3535533905,0.3240982056
-2.474873734,2.809143076e-07
\end{filecontents*}

\begin{filecontents*}{odd.dat}
a,b
1.767766952,0.1753572855e-2
-1.060660172,.3223449135
-3.889087296,0
\end{filecontents*}

\usepackage{twoopt}

\usepackage{url}
\usepackage[bf,compact,pagestyles,small]{titlesec}
\titlespacing*{\section}{0pt}{14pt}{4pt}
\titlespacing*{\subsection}{0pt}{8pt}{3pt}

\usepackage{fancyhdr,lastpage}
\def\maketimestamp{\count255=\time
\divide\count255 by 60\relax
\edef\thetime{\the\count255:}%
\multiply\count255 by-60\relax
\advance\count255 by\time
\edef\thetime{\thetime\ifnum\count255<10 0\fi\the\count255}
\edef\thedate{\number\day-\ifcase\month\or Jan\or Feb\or Mar\or
             Apr\or May\or Jun\or Jul\or Aug\or Sep\or Oct\or
             Nov\or Dec\fi-\number\year}
\def\timstamp{\hbox to\hsize{\tt\hfil\thedate\hfil\thetime\hfil}}}
\maketimestamp
\pagestyle{fancy}
\fancyhf[L]{\small\textsl{Lemvig}}
\fancyhf[R]{\small\textsl{On some Hermite series identities and their
applications to Gabor analysis}}

 \fancyfoot[L]{\scriptsize \tt date/time:\,\thedate/\thetime}
 \fancyfoot[C]{\small  \thepage{} of \pageref{LastPage}}
\fancyfoot[R]{} 
\fancypagestyle{plain}{%
\fancyhf{} 
\fancyfoot[C]{\small \thepage{} of \pageref{LastPage}}}

\numberwithin{equation}{section}  
\allowdisplaybreaks[4]
\newtheorem{theorem}{Theorem} 
\newtheorem{lemma}[theorem]{Lemma}

\theoremstyle{definition}
\newtheorem*{conjecture*}{Conjecture} 
\newtheorem{example}{Example}
\theoremstyle{remark}




\DeclareMathOperator{\Span}{span} %
\DeclareMathOperator*{\esssup}{ess\,sup} %
\DeclareMathOperator{\ft}{\mathcal{F}}
\DeclareMathOperator{\exponential}{e}



\newcommandtwoopt{\gaborG}[3][a][b]{\mathcal{G}(#3,#1,#2)} 

\newcommand{\myexp}[1]{\exponential^{#1}}






\newcommand*{\numbersys}[1]{\ensuremath{\mathbb{#1}}}
\newcommand*{\C}{\numbersys{C}}
\newcommand*{\R}{\numbersys{R}}

\newcommand*{\Z}{\numbersys{Z}}

\newcommand*{\N}{\numbersys{N}}

\newcommand*{\cF}{\mathcal{F}}

\newcommand*{\frameset}{\mathscr{F}}

 %
\newcommand{\itvcc}[2]{\ensuremath{\left[{#1},{#2}\right]}} %
\newcommand{\itvco}[2]{\ensuremath{\left[{#1},{#2}\right)}} %
 %
 %
\newcommand{\itvcos}[2]{\ensuremath{\lbrack{#1},{#2})}} %
\newcommand{\abs}[1]{\ensuremath{\left\lvert#1\right\rvert}}

\newcommand{\absbig}[1]{\ensuremath{\bigl\lvert#1\bigr\rvert}}

\newcommand{\norm}[2][]{\ensuremath{\left\lVert#2\right\rVert_{#1}}}


\newcommand{\innerprod}[3][]{\ensuremath{\left\langle #2,#3\right\rangle_{\! #1}}}

\newcommand{\set}[1]{\ensuremath{\left\lbrace{#1}\right\rbrace}}

\newcommand{\setprop}[2]{\ensuremath{\left\lbrace{#1} : {#2}\right\rbrace}}

\newcommand{\setpropsmall}[2]{\ensuremath{\lbrace{#1} : {#2}\rbrace}}

\newcommand*{\defeq}{\stackrel{\mathrm{def}}{=}}

\usepackage{hyperref}
\hypersetup{
 pdfview={FitH},
 pdfstartview={FitH},
 pdfauthor = {Jakob Lemvig} 
 pdftitle = {},
 pdfkeywords = {Hermite, frame, frame set, Gabor system},
 pdfcreator = {LaTeX with hyperref package},
 pdfproducer = PDFlatex}

\makeatletter
\def\blfootnote{\xdef\@thefnmark{}\@footnotetext}
\def\subjclass{\xdef\@thefnmark{}\@footnotetext}
\long\def\symbolfootnote[#1]#2{\begingroup%
\def\thefootnote{\fnsymbol{footnote}}\footnote[#1]{#2}\endgroup}
\if@titlepage
  \renewenvironment{abstract}{%
      \titlepage
      \null\vfil
      \@beginparpenalty\@lowpenalty
      \begin{center}%
        \bfseries \abstractname \eqref{eq:F-of-Zak} and the
fact that $\hat{h}_n= -h_n$
        \@endparpenalty\@M
      \end{center}}%
     {\par\vfil\null\endtitlepage}
\else
  \renewenvironment{abstract}{%
      \if@twocolumn
        \section*{\abstractname}%
      \else
        \small
        \list{}{%
          \settowidth{\labelwidth}{\textbf{\abstractname:}}
          \setlength{\leftmargin}{50pt}
          \setlength{\rightmargin}{50pt}
          \setlength{\itemindent}{\labelwidth}
          \addtolength{\itemindent}{\labelsep}
        }
        \item[\textbf{\abstractname:}]

      \fi}
      {\if@twocolumn\else\endlist\fi}
\fi
\makeatother

\begin{document}

\title{On some Hermite series identities and their applications to
  Gabor analysis}

\date{\today}

 \author{Jakob Lemvig\footnote{Technical University of Denmark, Department of Applied Mathematics and Computer Science, Matematiktorvet 303B, 2800 Kgs.\ Lyngby, Denmark, E-mail: \protect\url{jakle@dtu.dk}}\phantom{$\ast$}} 

 \blfootnote{2010 {\it Mathematics Subject Classification.} Primary
   42C15. Secondary: 42C05, 33C45}
 \blfootnote{{\it Key words and phrases.} Hermite functions, frame, frame set, Gabor
   system, Zak transform, Zibulski-Zeevi matrix} 

\maketitle

\thispagestyle{plain}
 \begin{abstract} 
   We prove some infinite series identities for the Hermite functions. From these identities we disprove the Gabor frame set conjecture for Hermite functions of order $4m+2$ and $4m+3$ for $m\in \{0\} \cup \N$. The results hold not only for Hermite functions, but for two large classes of eigenfunctions of the Fourier transform associated with the eigenvalues $-1$ and $i$, and the results indicate that the Gabor frame set of all such functions must have a rather complicated structure.
 \end{abstract}

\section{Introduction} 
\label{sec:non-frame-property}


Since John von Neumann's claim of completeness of the
 coherent state subsystems generated by the 
 Gaussian in his work on 
quantum mechanics \cite{MR0223138}, it has been of interest in mathematical physics and
analysis to determine when the set of coherent
states $\gaborG{g}:= \set{\myexp{2\pi i bm \cdot}g(\cdot-ak)}_{k,m\in
  \Z}$ is complete in various function spaces, e.g., $L^2(\R)$.  In
engineering, $\gaborG{g}$ is the so-called Gabor system
generated by the window function $g \in L^2(\R)$ with time-frequency
shifts along the lattice $a\Z \times b\Z$ in phase space. 
 For most
applications in signal processing and functional analysis,
completeness of $\gaborG{g}$ is nowadays not considered to be
sufficient; for instance, to guarantee unconditionally $L^2$-convergent and
stable expansions of functions in $L^2(\R)$ and to provide
characterizations of classical function spaces, one needs a stronger
property of $\gaborG{g}$, namely that the Gabor system constitutes a
frame for $L^2(\R)$, i.e, existence of constants $A,B>0$, termed frame
bounds, such that 
\begin{equation}
A \norm{f}^2 \le \sum_{k,m \in \Z} \abs{\innerprod{f}{\myexp{2\pi i bm
      \cdot}g(\cdot-ak)}}^2 \le B \norm{f}^2 \quad \text{for all } f
\in L^2(\R).\label{eq:frame-def}
\end{equation}

In this work we are interested in the frame properties of Gabor
systems generated by Hermite functions.
 We define the $n$th Hermite function  $h_n$ by
\[ 
h_n(x) = (c_n)^{-1/2} \myexp{\pi x^2} \left(\frac{d^n}{dx^n} \myexp{-2\pi x^2}\right) ,
\]
where $c_n = (2\pi)^n 2^{n-1/2} n!$ 
for $n \in \N \cup \{0\}$. The class of Hermite functions forms a natural
continuation of the study of von Neumann~\cite{MR0223138} and
Gabor~\cite{GaborTheory1946} as it contains the Gaussian
as a special case, $n=0$.
The \emph{frame set} of a window function $g \in L^2(\R)$, denoted by
$\frameset{(g)}$, is the parameter
values $(a,b)\in\mathbb{R}_+^2$ for which the associated Gabor system
$\gaborG{g}$ is a frame for $L^2(\R)$. Hence, we will study the set $\frameset{(h_n)}$, or to be
more precise, properties of its compliment. That is, following \cite{MR3232589}, we will ask what
prevents $\gaborG{g}$ from generating a frame? Our answers will show that the
Gabor frame set of Hermite functions must have a rather complicated
structure. Indeed, we will derive new obstructions
of the frame property for two classes of eigenfunctions of the Fourier
transform associated with the eigenvalue $-1$ and $i$, respectively,
which, in particular, disproves a conjecture on Hermite functions by Gr\"ochenig~\cite{MR3232589}. 

To understand Gr\"ochenig's conjecture, let us recall what is known
about $\frameset(h_n)$. Since Hermite functions have exponential decay
in time and frequency domain, it is known, see e.g., \cite{MR3232589},
that the upper frame bound holds, that the set $\frameset(h_n)$ is
open in $\R^2$ and that $\frameset(h_n) \subset \setprop{(a,b)\in \mathbb{R}^2_+
}{ ab < 1}$. For the Gaussian $h_0$, the necessary condition $ab<1$ for the
frame property is also
sufficient. This important result was
conjectured by Daubechies and Grossmann~\cite{MR924682} and proved by 
Lyubarskii~\cite{MR1188007} and by Seip and Wallst\'en \cite{MR1173117,MR1173118}. 
The proof relies on analytic properties of the short-time Fourier
transform of the Gaussian and the fact that
the Bargmann transform of an $L^2$-function is analytic. 
In \cite{MR2529475,MR2292280} Gr\"ochenig and Lyubarskii obtained the
following generalization: for any pair $(a,b)$ in $\mathbb{R}^2_+$ with $ab < \frac{1}{n+1}$,
the Gabor family $\gaborG{h_n}$ is a frame. Finally, Lyubarskii and
Nes~\cite{MR3027914} proved that the frame set of any sufficiently nice, \emph{odd} window
function, in particular, $h_{2m+1}$, $m \in \N \cup \{0\}$, cannot contain the hyperbolas
$ab= \tfrac{p}{p+1}$ for any $p \in \N$.
As no other obstructions for the frame property of $h_n$ was known,
this led Gr\"ochenig~\cite{MR3232589} to conjecture that 
 the frame set for the even Hermite
functions is the largest possible set $\frameset{(h_{2m})}=\setprop{(a,b)\in
  \R^2_+}{ab<1}$, and that the frame set for the odd Hermite functions
is $\frameset{(h_{2m+1})}=\setpropsmall{(a,b)\in
  \R^2_+}{ab<1, ab \neq \tfrac{p}{p+1}, p \in \N}$, $m \in \N \cup \{0\}$.
The conjecture is true for $h_0$ by the above mentioned results. The
conjecture for $h_1$ is due to
Lyubarskii and Nes~\cite{MR3027914}, and this paper will not shed new
light on this case. However, our results show that the conjecture is false for
$h_n$ with $n=4m+2$ and $n=4m+3$, $m \in \N\cup\{0\}$. We also give numerical
evidence in Section~\ref{sec:numer-exper} that it is false for $n=4$ and $n=5$ which leads us to
believe that the conjecture is also false for $n=4m$ and $n=4m+1$,
whenever $m>0$.  

Our proofs are based on Zak transform methods and certain infinite
series identities which are of independent interest. As an example, we
will show that $h_{4m+2}$, $m \in \N\cup\{0\}$, satisfies
\begin{equation}
\sum_{k\in \Z} (-1)^k h_{4m+2}(\sqrt{2}(k+\tfrac{p}{4}))= 0 \quad \text{for $p \in \set{1,3}$}.
\label{eq:h2-identity}
\end{equation}
For $m=0$ the identity concerns $h_2$, and it reads, for $p=1$,
\begin{equation}
 \sum_{k\in \Z} (-1)^k (8\pi (k+\tfrac{1}{4})^2-1)
\myexp{-2\pi\bigl(k+\tfrac{1}{4}\bigr)^2}= 0 ,
\label{eq:h2-identity-expl}
\end{equation}
which is illustrated in Figure~\ref{fig:hermite}. As we shall see in
Section~\ref{sec:some-infinite-series}, the identities in
\eqref{eq:h2-identity} are even true for any sufficiently nice
function that is an eigenfunction of the Fourier transform with
eigenvalue $-1$.
\begin{figure}
  \centering
\includegraphics{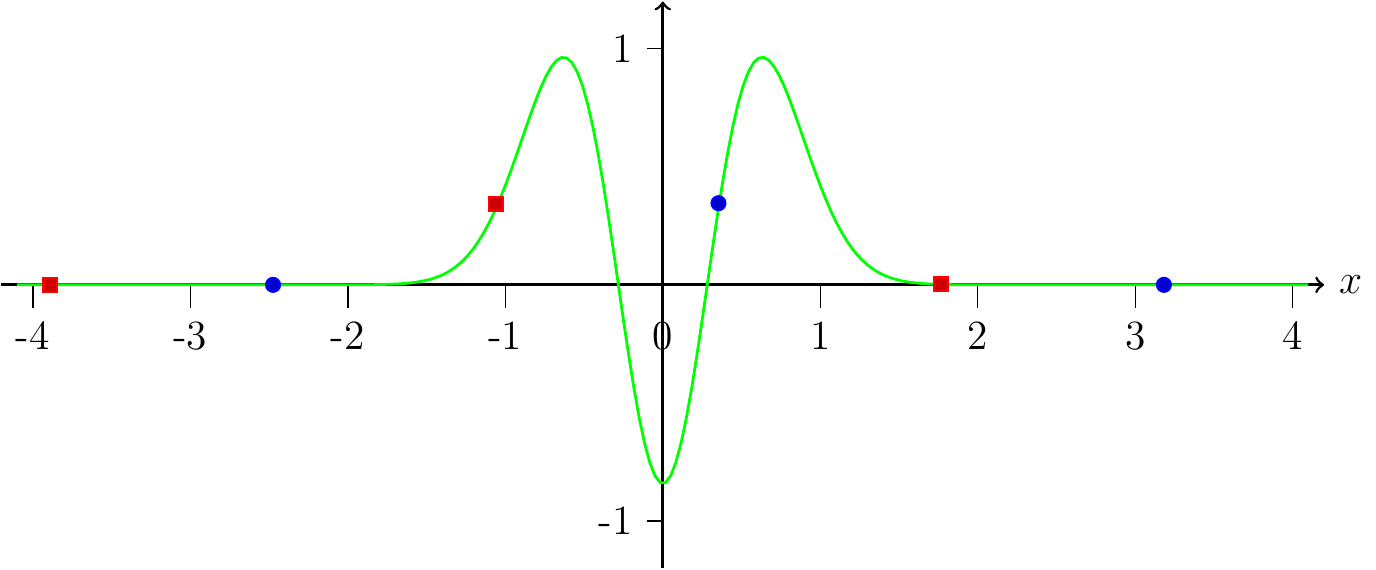}
\caption{The graph of $h_2$ and an illustration of the identity
\eqref{eq:h2-identity-expl}, where the samples for even and odd $k \in \Z$ are marked with
  blue circles and red squares, respectively. Note that the sampling has no
  simple symmetries, e.g., $h_2(-\sqrt{2}\,3/4)\neq h_2(\sqrt{2}/4)$.}
\label{fig:hermite}
\end{figure}
From the identity \eqref{eq:h2-identity} it follows that the Zak
transform $Z_{\sqrt{2}}$ of $h_{4m+2}$ has two zeros in $\itvco{0}{1}^2$, located one-half
apart on a horizontal line. By standard Zak transform methods in Gabor analysis,
detailed in Section~\ref{sec:obstruc},
it follows that
$\gaborG[1/\sqrt{2}][1/\sqrt{2}]{h_{4m+2}}$ is not a
frame. Note that it is not our focus to give a detailed analysis of
the frame set of specific Hermite functions, e.g.,
$\frameset{(h_2)}$. Instead, we are interested in determining values of $a$ and
$b$ for which $\gaborG{g}$ fails to be a frame for every nice window
$g$ in, e.g., the class of eigenfunctions of the Fourier transform
associated with the eigenvalue $-1$ to which all Hermite
functions of the
form $h_{4m+2}$, $m\in \N \cup\{0\}$, belong. Previously, not a single
obstruction for the frame property was known for any of the functions in this class.


\section{Preliminaries}
\label{sec:preliminaries}

We begin by recalling some properties of
the Hermite functions and the Zak transform.

\subsection{Hermite functions}
\label{sec:hermite-functions}

Hermite functions arise in many different
contexts, e.g., as eigenfunctions of the Hermite operator
$H=-\frac{d^2}{dx^2}+(2\pi x)^2$.
What is more important for us is that the Hermite functions are also
eigenfunctions for the Fourier transform:
\[ 
 \hat{h}_n(\gamma) = (-i)^n h_n(\gamma)  \quad a.e.\ \gamma \in \R.
\] 
Here, the Fourier transform is defined for $f \in L^1(\R)$ by 
\[
\ft
f(\xi)=\hat f(\gamma) = \int_{\R} f(x)\myexp{-2 \pi i
  \gamma x} \mathrm{d}x
  \]
with the usual extension to $L^2(\R)$. We let $H_j$, $j=0,1,2,3$,
denote the eigenspace of the Fourier transform corresponding to the
eigenvalue $(-i)^j$. More specifically, since $\set{h_n}_{n=0}^\infty$
is an orthonormal basis for $L^2(\R)$,
\[ 
H_j = \ker (\ft-(-i)^jI) = \overline{\Span{\setprop{h_{4m+j}}{m \in \N}}} =
\setprop{\sum_{m \in \N} c_m h_{4m+j}}{(c_m)\in \ell^2(\N)}.
\]
By $\cF^2\{f(x)\}=f(-x)$, it follows that any function in $H_j$, $j=0,2$, is
even and that any function in $H_j$, $j=1,3$, is odd.

Since the Fourier transform is a unitary operator, it preserves the
frame property, that is, the system $\gaborG{g}$ is a frame if and
only if the Fourier transform of the system $\gaborG[b][a]{\hat{g}}$
is a frame. Since the eigenvalue of the Hermite functions is of
modulus one, we immediately have the following simple result. It
implies that the frame set of Hermite functions is symmetric about the line $a=b$,
i.e., $(a,b)\in \frameset(h_n)$ if and only if $(b,a)\in \frameset(h_n) $.
\begin{lemma}
  Let $a,b>0$, $A,B>0$, and let $g \in H_j$ for some $j=0,1,2,3$. Then the
  following are equivalent:
  \begin{enumerate}[(i)]
  \item $\gaborG{g}$ is a frame with bounds $A$ and $B$,
  \item $\gaborG[b][a]{g}$ is a frame with bounds $A$ and $B$.
  \end{enumerate}
\end{lemma}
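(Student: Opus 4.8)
The plan is to reduce statement (ii) to statement (i) by applying the Fourier transform together with the eigenfunction property of $g$. First I would record the two standard facts about how $\FT$ interacts with time-frequency shifts: $\FT(E_b m \cdot)(x) \mapsto$ translation and $\FT$ of translation $\mapsto$ modulation, more precisely $\widehat{E_{bm} T_{ak} g} = T_{bm} E_{-ak} \hat g$ up to a unimodular constant. Hence the Gabor system $\gaborG{g}$ is mapped by the unitary $\FT$, up to unimodular scalars in front of each element, onto the system $\{T_{bm} E_{-ak} \hat g\}_{k,m \in \Z}$. Since reindexing $k \mapsto -k$ and multiplying individual frame elements by unimodular constants changes neither the frame property nor the frame bounds $A, B$, the system $\gaborG{g}$ is a frame with bounds $A,B$ if and only if $\{E_{-ak} T_{bm} \hat g\}_{k,m}$ — equivalently $\gaborG[b][a]{\hat g}$ — is a frame with the same bounds $A, B$.

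The second step is to remove the hat. Since $g \in H_j$, we have $\hat g = (-i)^j g$, a unimodular scalar multiple of $g$. Scaling the window by a unimodular constant $c$ multiplies every element of the Gabor system by $c$, which again leaves the frame inequalities in \eqref{eq:frame-def} invariant (both sides unchanged). Therefore $\gaborG[b][a]{\hat g}$ is a frame with bounds $A, B$ if and only if $\gaborG[b][a]{g}$ is a frame with bounds $A, B$. Chaining the two equivalences gives (i) $\Leftrightarrow$ (ii).

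I would carry out the steps in this order: (1) state the commutation relations between $\FT$ and the operators $T, E$; (2) use unitarity of $\FT$ to transfer the frame inequality, absorbing the unimodular phase factors and the index flip $k \mapsto -k$; (3) invoke $g \in H_j$ to replace $\hat g$ by $(-i)^j g$ and absorb the remaining unimodular constant. There is no real obstacle here — the only point requiring a little care is bookkeeping the phase factors so that it is transparent that none of them affect the constants $A$ and $B$; this is why the lemma is flagged as ``simple'' in the text. The content is entirely that $\FT$ is unitary and that the eigenvalue has modulus one.
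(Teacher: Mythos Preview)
Your proposal is correct and follows exactly the approach the paper indicates in the paragraph preceding the lemma: unitarity of the Fourier transform transfers the frame bounds from $\gaborG{g}$ to $\gaborG[b][a]{\hat g}$, and the eigenfunction property $\hat g = (-i)^j g$ (with $|(-i)^j|=1$) then replaces $\hat g$ by $g$. The paper does not spell out the phase-bookkeeping and reindexing steps you mention, but these are precisely the details that make the argument complete.
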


\subsection{The Zak transform}
\label{sec:zak-transform}

For any $\lambda>0$, the Zak transform of a function $f \in L^2(\R)$ is defined as
\begin{equation}\label{eq:zakTransform}
\left(Z_{\lambda}f\right)(x,\gamma)
= \sqrt{\lambda}\sum_{k\in\mathbb{Z}} f(\lambda(x+
  k))\myexp{-2\pi i k \gamma}, \quad a.e.\ x, \gamma \in \mathbb{R},
\end{equation}
with convergence in  $L^2_\mathrm{loc}(\R)$. The Zak transform
$Z_\lambda$ is a unitary map of $L^2(\R)$ onto $L^2(\itvco{0}{1}^2)$, and
 it has the following quasi-periodicity:
\[
Z_\lambda f(x+1,\gamma)= \myexp{2\pi i\gamma} Z_\lambda f(x,
\gamma),  \quad  Z_\lambda f(x, \gamma +1) = Z_\lambda f(x, \gamma) \quad \text{for 
a.e. } x,\gamma \in \R.
\]  
The Zak transform has been used by Weil~\cite{MR0165033} in harmonic
analysis on locally compact abelian groups, by
Gel'fand~\cite{MR0073136} in the study of Schr\"odinger's equation,
and by Zak\cite{MR1478343} in solid state physics. For a systematic
treatment of the Zak transform and its use in applied mathematics, we
refer to the paper by Janssen~\cite{MR947891}.  Recent
applications in Gabor analysis include
\cite{GrochenigCompleteness2015,MR3218799,MR3393698}.

The Zak transform inherits symmetries of the function $f$. The
following basic lemma will be used several times in the later
sections. The Wiener space $W(\R)$ consists of functions $g \in L^\infty(\R)$
for which $\sum_{k \in \Z} \esssup_{x \in
  \itvcc{0}{1}}\abs{g(x+k)}<\infty$. The assumption that $f$ belongs
to $W(\R)$ and is continuous in
Lemma~\ref{lem:Zak-symm} implies that $Z_\lambda f$ is continuous
which guarantees that the identities in the lemma hold pointwise.

\begin{lemma}
\label{lem:Zak-symm}
Let $m \in \Z$ and $\lambda>0$. Assume that $f \in W(\R)$ is continuous. 
\begin{enumerate}[(i)]
\item 
  If $f$ is an even function, then
  \[ Z_\lambda f(x,\gamma) = Z_\lambda f(-x,-\gamma) \qquad \text{for
    all } x,\gamma \in \R .\] In particular, $Z_\lambda f(x,\tfrac{m}{2}) = (-1)^m Z_\lambda
  f(1-x,\tfrac{m}{2})$ and
 \[ Z_\lambda f(x,\gamma) = 0 \qquad (x,\gamma) \in \Z^2 + (\tfrac12,\tfrac12).\] 
\item If $f$ is an odd function, then
  \[ Z_\lambda f(x,\gamma) = - Z_\lambda
  f(-x,-\gamma) \qquad \text{for
    all } x,\gamma \in \R . \] In particular, $Z_\lambda f(x,\tfrac{m}{2}) = (-1)^{m+1} Z_\lambda
  f(1-x,\tfrac{m}{2})$ and
 \[ Z_\lambda f(x,\gamma) = 0 \qquad (x,\gamma) \in \tfrac12 \Z^2
 \setminus \left(\Z^2 + (\tfrac12,\tfrac12)\right).\] 
\end{enumerate}
\end{lemma}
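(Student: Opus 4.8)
The plan is to argue directly from the series definition \eqref{eq:zakTransform} of $Z_\lambda f$, transferring the parity of $f$ into $Z_\lambda f$ term by term, and then to combine the resulting reflection identity with the quasi-periodicity relations recorded in Section~\ref{sec:zak-transform}. The hypotheses $f\in W(\R)$ and $f$ continuous enter only to make these manipulations rigorous: membership in $W(\R)$ makes the defining series converge absolutely and locally uniformly, so that re-indexing the sum is legitimate, and continuity of $f$ then makes $Z_\lambda f$ continuous, so that every identity may be checked at individual points $(x,\gamma)$.

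\emph{Reflection identities and the identities along $\gamma=\tfrac m2$.} Starting from $Z_\lambda f(-x,-\gamma)=\sqrt{\lambda}\sum_{k\in\Z}f(\lambda(-x+k))\myexp{2\pi i k\gamma}$, I substitute $k\mapsto -k$ and use $f(\lambda(-x-k))=f(\lambda(x+k))$ when $f$ is even, respectively $f(\lambda(-x-k))=-f(\lambda(x+k))$ when $f$ is odd; comparing with \eqref{eq:zakTransform} gives $Z_\lambda f(x,\gamma)=Z_\lambda f(-x,-\gamma)$ in case (i) and $Z_\lambda f(x,\gamma)=-Z_\lambda f(-x,-\gamma)$ in case (ii). For the "in particular" statements, I apply the reflection identity at $(x,\tfrac m2)$ and then move the first coordinate from $-x$ to $1-x$ via $Z_\lambda f(y+1,\gamma)=\myexp{2\pi i\gamma}Z_\lambda f(y,\gamma)$ with $\gamma=-\tfrac m2$, which contributes a factor $(-1)^m$, and finally replace $-\tfrac m2$ by $\tfrac m2$ using $1$-periodicity in the second variable (the two values differ by the integer $m$). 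This yields $Z_\lambda f(x,\tfrac m2)=(-1)^m Z_\lambda f(1-x,\tfrac m2)$ in (i) and the extra sign in (ii).

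\emph{The zero sets.} Here one looks for points that are fixed, modulo the quasi-periodicity lattice, by the relevant reflection, which forces the value there to equal its own negative and hence to vanish. In (i), taking $m=1$ and $x=\tfrac12$ in the relation just proved gives $Z_\lambda f(\tfrac12,\tfrac12)=-Z_\lambda f(\tfrac12,\tfrac12)=0$, and quasi-periodicity — which only multiplies by unimodular constants — spreads this zero over all of $\Z^2+(\tfrac12,\tfrac12)$. In (ii), combining $Z_\lambda f(0,\gamma)=-Z_\lambda f(0,-\gamma)$ with $1$-periodicity in $\gamma$ gives $Z_\lambda f(0,\gamma)=0$ whenever $2\gamma\in\Z$, and combining $Z_\lambda f(x,0)=-Z_\lambda f(-x,0)$ with the fact that $x\mapsto Z_\lambda f(x,0)$ is $1$-periodic gives $Z_\lambda f(x,0)=0$ whenever $2x\in\Z$; quasi-periodicity then propagates these zeros to all of $\tfrac12\Z^2\setminus(\Z^2+(\tfrac12,\tfrac12))$. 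A quick check that at the excluded points $\Z^2+(\tfrac12,\tfrac12)$ the reflection identity only gives a tautology (consistent with a nonzero value) shows that this description of the zero set is sharp.

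The whole argument is elementary; I expect the only point requiring genuine care to be the bookkeeping in part (ii): verifying that the fixed-point argument produces \emph{exactly} the set $\tfrac12\Z^2\setminus(\Z^2+(\tfrac12,\tfrac12))$, neither larger nor smaller, and making sure the hypotheses $f\in W(\R)$ and $f$ continuous are really what license the term-by-term evaluation at a single point $(x,\gamma)$ rather than merely an $L^2_{\mathrm{loc}}$ identity.
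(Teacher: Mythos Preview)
The paper states this lemma without proof, treating it as a basic fact about the Zak transform; there is therefore no argument in the paper to compare against. Your proposal is correct and is exactly the standard derivation one would expect: the reflection identity comes directly from the index substitution $k\mapsto -k$ in \eqref{eq:zakTransform}, the identities along $\gamma=\tfrac m2$ follow by combining reflection with quasi-periodicity, and the zero sets are obtained by locating the fixed points of the reflection modulo the quasi-periodicity relations. Your remarks on why the hypotheses $f\in W(\R)$ and continuity are needed (absolute convergence permitting re-indexing, and continuity of $Z_\lambda f$ making the identities hold pointwise rather than merely almost everywhere) match the paper's own justification preceding the lemma.
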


By quasi-periodicity, the function $Z_\lambda f$ on $\R^2$ is
determined by its values on $\itvco{0}{1}^2$. Hence, if
$Z_\lambda f(x_0,\gamma_0)=0$ for some $(x_0,\gamma_0) \in \R^2$, then
$Z_\lambda f(x,\gamma)=0$ for all $(x,\gamma)\in \Z^2 +
(x_0,\gamma_0)$. For this reason we will often only explicitly mention the
zeros of $Z_\lambda f$ on $\itvco{0}{1}^2$.


If $f \in W(\R)$ and $\hat{f} \in W(\R)$, it follows by an application
of Poisson summation formula, see e.g., \cite{MR947891} or \cite[Proposition 8.2.2]{MR1843717}, that 
\begin{equation}
  \label{eq:F-of-Zak}
  Z_\lambda f(x,\gamma) = \myexp{2\pi i x \gamma}
  Z_{1/\lambda}\hat{f}(\gamma,-x) \quad \text{for all } x,\gamma \in \R,
\end{equation}
with absolute convergence of the series . 
In particular, this relation holds for any function $f$ in $H_j \cap
W(\R)$ for $j=0,1,2,3$.  Note that any
function $f$ in $H_j \cap
W(\R)$ is continuous since $\hat{f} \in W(\R) \subset L^1(\R)$.


\section{Some infinite series identities}
\label{sec:some-infinite-series}

The infinite series identities for Hermite functions derived in this
section will play a crucial role in the counterexamples in Section~\ref{sec:obstruc}. The
identities are of independent interest and can be formulated as
multiple zeros of the Zak transform. We remark that it is not
difficult to find a single zero of the Zak transform of Hermite functions, see, e.g.,
Lemma~\ref{lem:Zak-symm}. We will find $k$ zeros of $Z_\lambda h_n(x,\gamma)$
for a fixed value of $\gamma$, each $1/(k+1)$ apart with respect to
the $x$ variable, which is a much harder task that depends delicately
on the parameter $\lambda$. 

\begin{lemma}
\label{lem:h2-identities}
Let $n=4m+2$ for some $m \in \N \cup \{0\}$. Then
\begin{equation}
Z_{\sqrt{2}}h_n(\tfrac{p}{4},\tfrac12) \defeq  2^{1/4} \sum_{k\in \Z} (-1)^k h_{n}(\sqrt{2}(k+\tfrac{p}{4}))=0 \quad \text{for $p \in \set{1,3}$},
\label{eq:h-even-sqrt-2}
\end{equation}
and
\begin{equation}
  Z_{\sqrt{3}}h_n(\tfrac{p}{6},\tfrac12) \defeq 3^{1/4} \sum_{k\in \Z}
  (-1)^k h_{n}(\sqrt{3}(k+\tfrac{p}{6}))=0 \quad \text{for $p \in
    \set{1,5}$}. \label{eq:h-even-sqrt-3}
\end{equation}
\end{lemma}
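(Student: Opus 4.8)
The plan is to reduce the statement to the class of ``nice'' eigenfunctions of the Fourier transform with eigenvalue $-1$, and then to read off the two (resp.\ three) prescribed zeros of $Z_{\sqrt2}f$ (resp.\ $Z_{\sqrt3}f$) from a single functional equation for the Zak transform, obtained by feeding \eqref{eq:F-of-Zak} back into a $q$-fold refinement of the same lattice. First I would observe that for $n=4m+2$ the window $f=h_n$ is even, lies in the Schwartz class (hence $f,\hat f\in W(\R)$, so that \eqref{eq:F-of-Zak} holds pointwise and $Z_\lambda f$ is continuous), and satisfies $\hat f=(-i)^{4m+2}f=-f$. Everything below uses only these properties, so the identities in fact hold for every $f\in H_2\cap W(\R)$.

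The heart of the argument is a functional equation for the Zak transform. Starting from \eqref{eq:F-of-Zak} in the form $Z_{\sqrt2}f(x,\gamma)=-\myexp{2\pi i x\gamma}Z_{1/\sqrt2}f(\gamma,-x)$ (using $\hat f=-f$), I would split the defining series of $Z_{1/\sqrt2}f(\gamma,-x)$ according to the parity of the summation index and recognise each half as a rescaled Zak transform (this is just the Zibulski--Zeevi decomposition for $q=2$), arriving at
\[
Z_{\sqrt2}f(x,\gamma)=-\tfrac1{\sqrt2}\,\myexp{2\pi i x\gamma}\Bigl[Z_{\sqrt2}f\bigl(\tfrac\gamma2,-2x\bigr)+\myexp{2\pi i x}Z_{\sqrt2}f\bigl(\tfrac{\gamma+1}2,-2x\bigr)\Bigr]
\]
for all $x,\gamma\in\R$; the analogous computation with residues modulo $3$ gives
\[
Z_{\sqrt3}f(x,\gamma)=-\tfrac1{\sqrt3}\,\myexp{2\pi i x\gamma}\sum_{r=0}^{2}\myexp{2\pi i r x}\,Z_{\sqrt3}f\bigl(\tfrac{\gamma+r}3,-3x\bigr)
\]
for all $x,\gamma\in\R$. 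Both identities make sense precisely because $\sqrt2\cdot\sqrt2$ and $\sqrt3\cdot\sqrt3$ are integers, which is what pins down the special values of $\lambda$.

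Finally I would evaluate. Writing $G=Z_{\sqrt2}f$ and putting $(x,\gamma)=(\tfrac14,\tfrac12)$ in the first equation, $1$-periodicity of $G$ in the second variable gives $G(\tfrac14,-\tfrac12)=G(\tfrac14,\tfrac12)$ and $G(\tfrac34,-\tfrac12)=G(\tfrac34,\tfrac12)$, while $G(\tfrac34,\tfrac12)=-G(\tfrac14,\tfrac12)$ by Lemma~\ref{lem:Zak-symm}(i); since $\myexp{2\pi i/4}=i$, the bracket equals $(1-i)G(\tfrac14,\tfrac12)$, and as $1-i=\sqrt2\,\myexp{-\pi i/4}$ the prefactor $-\tfrac1{\sqrt2}\myexp{\pi i/4}$ turns the right-hand side into exactly $-G(\tfrac14,\tfrac12)$. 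Hence $G(\tfrac14,\tfrac12)=0$, and then $G(\tfrac34,\tfrac12)=-G(\tfrac14,\tfrac12)=0$ by Lemma~\ref{lem:Zak-symm}(i), which is \eqref{eq:h-even-sqrt-2}. For \eqref{eq:h-even-sqrt-3}, set $G=Z_{\sqrt3}f$ and evaluate the second equation at $(\tfrac16,\tfrac12)$: the three Zak values are $G(\tfrac16,\tfrac12)$, $G(\tfrac12,\tfrac12)=0$ (again Lemma~\ref{lem:Zak-symm}(i)) and $G(\tfrac56,\tfrac12)=-G(\tfrac16,\tfrac12)$, with phases $1,\myexp{\pi i/3},\myexp{2\pi i/3}$, so the right-hand side is $-\tfrac1{\sqrt3}\myexp{\pi i/6}(1-\myexp{2\pi i/3})G(\tfrac16,\tfrac12)=-G(\tfrac16,\tfrac12)$ because $1-\myexp{2\pi i/3}=\sqrt3\,\myexp{-\pi i/6}$; hence $G(\tfrac16,\tfrac12)=G(\tfrac56,\tfrac12)=0$.

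The part I expect to be the real work is the rigorous derivation of the two functional equations: one must justify the rearrangement of the doubly-indexed Zak series (so absolute convergence from the Poisson summation formula, i.e.\ the hypotheses $f,\hat f\in W(\R)$, is used precisely here) and verify that the resulting identities hold pointwise for continuous $Z_\lambda f$. Once this is in place, the extraction of the zeros is the short computation with $8$th and $12$th roots of unity carried out above, and the cases $p=3$ and $p=5$ are immediate from the symmetry in Lemma~\ref{lem:Zak-symm}(i).
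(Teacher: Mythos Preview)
Your argument is correct. Both proofs rest on the same ingredients—the Poisson identity \eqref{eq:F-of-Zak} together with $\hat f=-f$, a splitting of the Zak series into residue classes, and the parity symmetry of Lemma~\ref{lem:Zak-symm}(i)—but they are organised differently. The paper first splits the defining series of $Z_{\sqrt2}h_n(\tfrac14,\tfrac12)$ into even and odd $k$, passes to the scale $\lambda=2\sqrt2$, applies \eqref{eq:F-of-Zak} there, and then splits modulo $8$; subtracting the two resulting $8$-term expressions shows that the difference equals its own negative. You instead apply \eqref{eq:F-of-Zak} immediately at scale $\lambda=\sqrt2$ and split the dual series modulo $2$, which lands back at $Z_{\sqrt2}f$ because $(\sqrt2)^2\in\Z$; this yields a clean self-referential functional equation valid for all $(x,\gamma)$, and a single evaluation at $(\tfrac14,\tfrac12)$ finishes the job. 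Your packaging is shorter and makes transparent why the special values $\lambda=\sqrt2,\sqrt3$ arise (namely $\lambda^2\in\Z$), whereas the paper's route, though more computational, has the virtue of being entirely explicit at each step. Either way the cases $p=3$ and $p=5$ follow from Lemma~\ref{lem:Zak-symm}(i) exactly as you say.
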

\begin{proof}
We first prove the assertions in \eqref{eq:h-even-sqrt-2}. 
Let
$p=1$. Since the sum in \eqref{eq:h-even-sqrt-2} converges absolutely,
we can split the sum in even and odd
indices $k \in \Z$. Hence, proving \eqref{eq:h-even-sqrt-2} is
equivalent to proving: 
\[
 \sum_{k\in \Z}  h_{n}(\sqrt{2}(2k+\tfrac{1}{4}))=  \sum_{k\in \Z}  h_{n}(\sqrt{2}(2k+\tfrac{5}{4})).
\]   
In terms of the Zak transform, we need to prove that
\begin{equation}
Z_{2^{3/2}}h_n (\tfrac18,0) = Z_{2^{3/2}}h_n (\tfrac58,0). \label{eq:Zak-sqrt-8}
\end{equation}
We first consider the left hand side. By \eqref{eq:F-of-Zak} and the
fact that $\hat{h}_n= -h_n$, we obtain
\[ 
Z_{2^{3/2}}h_n (\tfrac18,0) = Z_{2^{-3/2}}\hat{h}_n (0,-\tfrac18) \defeq -2^{-3/4} \sum_{k \in \Z} h_n(2^{-3/2}k)
\myexp{2\pi i k/8}.
\]
Substituting $k \in \Z$ for $8m+\ell$, where $m\in \Z$ and $\ell
=0,1,\dots, 7$, we find that
\begin{align}
Z_{2^{3/2}}h_n (\tfrac18,0) 
 &= -2^{-3/4} 
\sum_{\ell=0}^7 \sum_{m \in \Z} h_n(2^{3/2}(m+\frac{\ell}{8}))
\myexp{2\pi i \ell/8} \nonumber \\ 
&= -2^{-3/2}\sum_{\ell=0}^7 Z_{2^{3/2}}h_n (\tfrac{\ell}{8},0) 
\myexp{2\pi i \ell/8} \label{eq:2}
\end{align}
The odd terms over $\ell$ sum to:
\begin{align*}
  \sum_{\ell\in\{1,3,5,7\}} Z_{2^{3/2}}h_n
  (\tfrac{\ell}{8},0) \myexp{2\pi i \ell/8} =& Z_{2^{3/2}}h_n
  (\tfrac{1}{8},0) (\myexp{2\pi i /8}+\myexp{2\pi i 7/8}) +
  Z_{2^{3/2}}h_n (\tfrac{5}{8},0) (\myexp{2\pi i 3/8}+\myexp{2\pi i
    5/8})\\ =& \sqrt{2} Z_{2^{3/2}}h_n (\tfrac{1}{8},0) - \sqrt{2}
  Z_{2^{3/2}}h_n (\tfrac{5}{8},0),
\end{align*}
where we have used Lemma~\ref{lem:Zak-symm}. Similarly, we find that
\begin{equation}
Z_{2^{3/2}}h_n (\tfrac58,0) 
 = -2^{-3/2} 
\sum_{\ell=0}^7 Z_{2^{3/2}}h_n
  (\tfrac{\ell}{8},0) 
\myexp{2\pi i 5\ell/8} \label{eq:3}
\end{equation}
where the odd terms over $\ell$ sum to:
\begin{align*}
  \sum_{\ell\in\{1,3,5,7\}} Z_{2^{3/2}}h_n
  (\tfrac{\ell}{8},0) \myexp{2\pi i 5\ell /8} =& Z_{2^{3/2}}h_n
  (\tfrac{5}{8},0) (\myexp{2\pi i /8}+\myexp{2\pi i 7/8}) +
  Z_{2^{3/2}}h_n (\tfrac{1}{8},0) (\myexp{2\pi i 3/8}+\myexp{2\pi i
    5/8})\\ =& \sqrt{2} Z_{2^{3/2}}h_n (\tfrac{5}{8},0) - \sqrt{2}
  Z_{2^{3/2}}h_n (\tfrac{1}{8},0).
\end{align*}
Note that $\ell \equiv 5\ell \pmod 8$ for even $\ell \in 2\Z$. 
Thus, if we subtract the two right hand sides of \eqref{eq:2} and
\eqref{eq:3},
the
even terms over $\ell=0,2,4,6$ cancel out. Hence,
\[
Z_{2^{3/2}}h_n (\tfrac18,0) - Z_{2^{3/2}}h_n (\tfrac58,0) = -(Z_{2^{3/2}}h_n (\tfrac18,0) - Z_{2^{3/2}}h_n (\tfrac58,0)) .
\]
However, this is only possible if \eqref{eq:Zak-sqrt-8} holds which was what we had to prove. This
completes the proof of the case $p=1$.  

For the case $p=3$, note that, by Lemma~\ref{lem:Zak-symm}, 
\[ 
Z_{\sqrt{2}}h_n(\tfrac{1}{4},\tfrac12)  =-Z_{\sqrt{2}}h_n(\tfrac{3}{4},\tfrac12),
\]
hence the identity follows from the case $p=1$. 


The proof of  \eqref{eq:h-even-sqrt-3} goes along the same lines as
the proof of \eqref{eq:h-even-sqrt-2}; the details are left for the reader.
\end{proof}

\begin{lemma}
\label{lem:h3-identities}
Let $n=4m+3$ for some $m \in \N \cup \{0\}$ and let $s \in \{2,3,4\}$. Then
\[
Z_{\sqrt{s}}h_n(\tfrac{p}{s},0) \defeq s^{1/4} \sum_{k\in \Z} h_{n}(\sqrt{s} (k+\tfrac{p}{s}))=0
 \quad \text{for $p \in \set{0,1,\dots, s-1}$.}
\]
\end{lemma}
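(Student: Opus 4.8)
The plan is to run the Zak-transform self-referencing argument used in the proof of Lemma~\ref{lem:h2-identities}, now exploiting that $h_n$ with $n=4m+3$ is odd and satisfies $\hat h_n=(-i)^{4m+3}h_n=ih_n$. First I would dispose of the ``symmetric'' values of $p$ directly from Lemma~\ref{lem:Zak-symm}(ii): since $h_n$ is odd, $Z_\lambda h_n$ vanishes on $\tfrac12\Z^2\setminus(\Z^2+(\tfrac12,\tfrac12))$ for every $\lambda>0$, so $Z_{\sqrt s}h_n(0,0)=0$ settles $p=0$ for all $s$, and $Z_{\sqrt s}h_n(\tfrac12,0)=0$ settles $p=\tfrac s2$, i.e.\ $(s,p)=(2,1)$ and $(s,p)=(4,2)$. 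Moreover the relation $Z_\lambda h_n(x,0)=-Z_\lambda h_n(1-x,0)$ from the same lemma shows that the identity at $p$ is equivalent to the one at $s-p$, so $p=s-1$ follows from $p=1$. Thus the lemma reduces to the two assertions $Z_{\sqrt3}h_n(\tfrac13,0)=0$ and $Z_{2}h_n(\tfrac14,0)=0$.

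For each of these I would combine \eqref{eq:F-of-Zak} with $\hat h_n=ih_n$ to get
\[
Z_{\sqrt s}h_n(\tfrac1s,0)=Z_{1/\sqrt s}\hat h_n(0,-\tfrac1s)=i\,s^{-1/4}\sum_{k\in\Z}h_n(k/\sqrt s)\,\myexp{2\pi i k/s},
\]
and then substitute $k=sj+\ell$ with $j\in\Z$, $\ell\in\{0,1,\dots,s-1\}$. Since $\myexp{2\pi i k/s}=\myexp{2\pi i \ell/s}$ and $h_n(k/\sqrt s)=h_n(\sqrt s(j+\tfrac\ell s))$, the inner sum over $j$ is exactly $s^{-1/4}Z_{\sqrt s}h_n(\tfrac\ell s,0)$, so the display collapses to the closed relation
\[
Z_{\sqrt s}h_n(\tfrac1s,0)=i\,s^{-1/2}\sum_{\ell=0}^{s-1}\myexp{2\pi i\ell/s}\,Z_{\sqrt s}h_n(\tfrac\ell s,0).
\]

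The last step is to simplify this right-hand side with the symmetries above: the $\ell=0$ term drops out, the $\ell=s/2$ term drops out when $s$ is even, and the remaining terms combine in pairs through $Z_{\sqrt s}h_n(\tfrac{s-\ell}{s},0)=-Z_{\sqrt s}h_n(\tfrac\ell s,0)$, so every surviving term equals $\pm Z_{\sqrt s}h_n(\tfrac1s,0)$. Collecting, the relation becomes $Z_{\sqrt s}h_n(\tfrac1s,0)=i\,s^{-1/2}C\,Z_{\sqrt s}h_n(\tfrac1s,0)$ with $C=\myexp{2\pi i/3}-\myexp{4\pi i/3}=i\sqrt3$ for $s=3$ and $C=\myexp{2\pi i/4}-\myexp{6\pi i/4}=2i$ for $s=4$; in both cases $i\,s^{-1/2}C=-1$, which forces $Z_{\sqrt s}h_n(\tfrac1s,0)=0$ and finishes the proof. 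The step I expect to be the crux is checking that the scalar $i\,s^{-1/2}C$ collapses to exactly $-1$: this is where the eigenvalue $i$ (rather than $\pm1$ or $-i$) meets the Gauss-sum-type cancellation of the roots of unity, and it is precisely why the statement is restricted to $s\in\{2,3,4\}$ and to $n\equiv3\pmod4$ (for $n\equiv1$ the same manipulation only produces a tautology). As in Lemma~\ref{lem:h2-identities}, all series converge absolutely and $Z_\lambda h_n$ is continuous, so the manipulations are valid pointwise; and since only the oddness of $h_n$, the identity $\hat h_n=ih_n$, and $h_n\in W(\R)$ are used, the same conclusion holds for every $f\in H_3\cap W(\R)$.
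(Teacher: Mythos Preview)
Your proposal is correct and follows essentially the same approach as the paper: apply \eqref{eq:F-of-Zak} with $\hat h_n=ih_n$, split the sum modulo $s$, use the odd symmetry from Lemma~\ref{lem:Zak-symm}(ii) to kill the $\ell=0$ (and $\ell=s/2$) terms and to pair $\ell$ with $s-\ell$, and then observe that the resulting scalar equals $-1$. The paper writes out only the case $s=3$ (using the residue set $\{-1,0,1\}$ rather than $\{0,1,2\}$) and leaves the others to the reader; your version organizes the reduction more systematically and carries out $s=4$ explicitly, but the argument is the same.
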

\begin{proof}
We will only prove the case $s=3$ as the other cases are similar. 
  For $p=0$ the identity follows from the fact that $h_n$ is an odd
  function. For $p=1$ we have, using  \eqref{eq:F-of-Zak} and
  $\hat{h}_n= i h_n$,
  \begin{multline}
    3^{-1/4} Z_{\sqrt{3}}h_n(\tfrac{1}{3},0) =
    3^{-1/4} Z_{\tfrac{1}{\sqrt{3}}}\hat{h}_n(0,-\tfrac{1}{3}) = 3^{-1/2} \sum_{k\in \Z}
    i h_{n}(\tfrac{1}{\sqrt{3}}k) \myexp{2\pi i k/3} \\ 
= 3^{-1/2} i  \left( \sum_{m\in \Z}
    h_{n}(\sqrt{3}m) + \sum_{m\in \Z}
    h_{n}(\sqrt{3}(m+\tfrac{1}{3})) \myexp{2\pi i /3} + \sum_{m\in \Z}
    h_{n}(\sqrt{3}(m-\tfrac{1}{3})) \myexp{-2\pi i /3} \right),  
\label{eq:5}
\end{multline}
where we have substituted $k$ for $3m+\ell$ with
  $m \in \Z$ and $\ell \in \set{-1,0,1}$,
Since $h_n$ is odd, it follows directly that $\sum_{m\in \Z}
    h_{n}(\sqrt{3}m)=0$. By yet another symmetry argument (e.g.,
    Lemma~\ref{lem:Zak-symm}), we also see that
\[ \sum_{m\in \Z}
    h_{n}(\sqrt{3}(m-\tfrac{1}{3})) = \sum_{m\in \Z}
    h_{n}(\sqrt{3}(m+\tfrac{2}{3})) = -\sum_{m\in \Z}
    h_{n}(\sqrt{3}(m+\tfrac{1}{3})).
\]
Continuing the computation in \eqref{eq:5} yields
  \begin{align*}
  \sum_{k\in \Z} h_{n}(\sqrt{3} (k+\tfrac{1}{3})) &\defeq   3^{-1/4}
  Z_{\sqrt{3}}h_n(\tfrac{1}{3},0) = 3^{-1/2} i (\myexp{2\pi i /3}-\myexp{-2\pi i /3}) \sum_{m\in \Z}
    h_{n}(\sqrt{3}(m+\tfrac{1}{3}))\\ &= - \sum_{m\in \Z}
    h_{n}(\sqrt{3}(m+\tfrac{1}{3})), 
  \end{align*}
where we use that $\myexp{2\pi i /3}-\myexp{-2\pi i /3}=i \sqrt{3}$. Thus 
$\sum_{m\in \Z}
    h_{n}(\sqrt{3}(m+\tfrac{1}{3})) =0$ which completes the case
    $p=1$. 

Consider now $p=2$. By Lemma~\ref{lem:Zak-symm} we have 
\[ 
Z_{\sqrt{3}}h_n(\tfrac{1}{3},0)  =-Z_{\sqrt{3}}h_n(\tfrac{2}{3},0),
\]
hence the assertion for $p=2$ follows from the case $p=1$. 
\end{proof}

  Note that the only property of $h_n$ used in the proof of the above two
  lemmas is that $h_n$ is an eigenfunction of the Fourier transform
  associated with the eigenvalue $-1$ and $i$, respectively, for which
  Poisson summation formula~\eqref{eq:F-of-Zak} holds pointwise with absolute convergence. Recall that
   functions in $H_2 \cap W(\R)$ are even and continuous, while
   functions in $H_3 \cap W(\R)$ are odd and continuous. Therefore, we
 can formulate the following extension of the results in this section using Lemma~\ref{lem:Zak-symm}.
   \begin{lemma}
\label{lem:extension-identities}
\begin{enumerate}[(i)]
\item For $g \in H_2 \cap W(\R)$, we have:
\[ 
  Z_{\sqrt{2}} g (x,\gamma) = 0 \quad \text{for } (x,\gamma) \in
  (\tfrac14 \Z \setminus \Z) \times (\Z +\tfrac12),
\]
and
\[ 
  Z_{\sqrt{3}} g (x,\gamma) = 0 \quad \text{for } (x,\gamma) \in
  (\tfrac13 \Z + \tfrac16) \times (\Z +\tfrac12).
\]
\item For $g \in H_3 \cap W(\R)$ and $s\in \{2,3,4\}$, we have:
\[ 
  Z_{\sqrt{s}} g (x,\gamma) = 0 \quad \text{for } (x,\gamma) \in
  \tfrac1s \Z \times \Z .
\]
\end{enumerate}
   \end{lemma}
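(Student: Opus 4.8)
The plan is to observe that the two lemmas of this section continue to hold verbatim with $h_n$ replaced by an arbitrary window in the appropriate Fourier eigenspace, then to adjoin the single symmetry zero at $x=\tfrac12$ that those identities do not produce, and finally to propagate all the zeros by quasi-periodicity of the Zak transform.

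First I would record the generalized identities. As already noted in the remark preceding the statement, the proofs of Lemma~\ref{lem:h2-identities} and Lemma~\ref{lem:h3-identities} use only (a) the eigenrelation $\hat g=-g$, respectively $\hat g = ig$; (b) the evenness, respectively oddness, of $g$ through Lemma~\ref{lem:Zak-symm}; and (c) the pointwise Poisson identity \eqref{eq:F-of-Zak} together with absolute convergence of the sampling series. For $g\in H_2\cap W(\R)$ one has $\hat g=-g\in W(\R)$ with $g$ even and continuous, and for $g\in H_3\cap W(\R)$ one has $\hat g = ig\in W(\R)$ with $g$ odd and continuous; hence \eqref{eq:F-of-Zak} applies and, since $g\in W(\R)$ is continuous, all the relevant lattice sums converge absolutely. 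The identical computations therefore yield $Z_{\sqrt2}g(\tfrac14,\tfrac12)=Z_{\sqrt2}g(\tfrac34,\tfrac12)=0$ and $Z_{\sqrt3}g(\tfrac16,\tfrac12)=Z_{\sqrt3}g(\tfrac56,\tfrac12)=0$ for $g\in H_2\cap W(\R)$, and $Z_{\sqrt s}g(\tfrac ps,0)=0$ for $p=0,1,\dots,s-1$ and $s\in\{2,3,4\}$ for $g\in H_3\cap W(\R)$.

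Next I would supply the one missing node in part (i). Modulo $\Z$, the sets $\tfrac14\Z\setminus\Z$ and $\tfrac13\Z+\tfrac16$ reduce to $\{\tfrac14,\tfrac12,\tfrac34\}$ and $\{\tfrac16,\tfrac12,\tfrac56\}$, so beyond the four zeros above one still needs $Z_{\sqrt2}g(\tfrac12,\tfrac12)=0$ and $Z_{\sqrt3}g(\tfrac12,\tfrac12)=0$. Both follow at once from the last display of Lemma~\ref{lem:Zak-symm}(i), which gives $Z_\lambda g(x,\gamma)=0$ on $\Z^2+(\tfrac12,\tfrac12)$ for every $\lambda>0$. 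For part (ii) the set $\tfrac1s\Z$ is, modulo $\Z$, exactly $\{\tfrac ps:p=0,\dots,s-1\}$, so no extra node is required there.

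Finally I would invoke quasi-periodicity: since $Z_\lambda g(x+1,\gamma)=\myexp{2\pi i\gamma}Z_\lambda g(x,\gamma)$ and $Z_\lambda g(x,\gamma+1)=Z_\lambda g(x,\gamma)$, a zero at a point $(x_0,\gamma_0)$ forces a zero at every point of $\Z^2+(x_0,\gamma_0)$. Applying this to the finitely many base points collected above produces precisely the claimed zero sets $(\tfrac14\Z\setminus\Z)\times(\Z+\tfrac12)$, $(\tfrac13\Z+\tfrac16)\times(\Z+\tfrac12)$ and $\tfrac1s\Z\times\Z$. There is no genuine obstacle in this argument: the entire substance lies in Lemmas~\ref{lem:h2-identities} and \ref{lem:h3-identities}. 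The only points demanding care are checking that $\hat g$ again lies in $W(\R)$ so that \eqref{eq:F-of-Zak} and the absolute convergence used in those proofs remain available for general $g$, and noticing that the midpoint $x=\tfrac12$ must be obtained from the symmetry Lemma~\ref{lem:Zak-symm} rather than from the generalized series identities.
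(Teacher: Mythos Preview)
Your proposal is correct and follows exactly the approach the paper takes: the paper's entire proof is the paragraph preceding the lemma, which observes that the only properties of $h_n$ used in Lemmas~\ref{lem:h2-identities} and~\ref{lem:h3-identities} are the eigenrelation and the pointwise Poisson formula, and then invokes Lemma~\ref{lem:Zak-symm} to pick up the remaining zeros. You have spelled out precisely this argument, including the point the paper leaves implicit---that the node $x=\tfrac12$ in part~(i) must be obtained from Lemma~\ref{lem:Zak-symm}(i) rather than from the series identities---and the quasi-periodicity extension to the full lattice.
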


\section{New obstructions of the frame property}
\label{sec:obstruc}

For rationally oversampled Gabor systems, i.e., $\mathcal{G}(g,a,b)$ with
\[
ab \in \mathbb{Q}, \quad ab=\frac{p}{q} \quad \gcd(p,q)=1,
\]
we define column vectors $\phi^g_\ell(x,\gamma) \in \C^p$ for $\ell
\in \set{0,1, \dots, q-1}$ by
\[ 
\phi^g_\ell(x,\gamma) = \left(p^{-\frac{1}{2}} (Z_{\frac{1}{b}}g)(x-\ell
  \frac{p}{q},\gamma+\frac{k}{p})\right)_{k=0}^{p-1} \ a.e. \ x,\gamma \in \mathbb{R}.
\] 
The following characterization of rationally oversampled Gabor frames
is due to Zibulski and Zeevi~\cite{MR1448221}.
\begin{theorem}
\label{thm:ZZ_singular_values}
  Let $A,B>0$, and let $g \in L^2(\R)$. Suppose $\mathcal{G}(g,a,b)$
  is a rationally oversampled Gabor system. Then the following
  assertions are equivalent:
\begin{enumerate}[(i)]
\item   $\mathcal{G}(g,a,b)$ is a Gabor frame for $L^2(\R)$ with bounds $A$ and $B$,
\item $\set{\phi^g_\ell(x,\gamma)}_{\ell=0}^q$ is a frame for $\C^p$ with
  uniform bounds $A$ and $B$ for a.e. $(x,\gamma) \in
  \itvcos{0}{1}^2$. 
\end{enumerate}
\end{theorem}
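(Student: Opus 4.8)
The plan is to transport the frame inequality \eqref{eq:frame-def} through a chain of unitary maps until it decouples into inequalities on the fibres $\C^p$. First I would reduce to $b=1$. The dilation $D_{1/b}$, $(D_{1/b}f)(t)=b^{-1/2}f(t/b)$, is unitary on $L^2(\R)$ and satisfies $D_{1/b}(E_{bm}T_{ak}g)=E_{m}T_{abk}(D_{1/b}g)$; hence $\mathcal{G}(g,a,b)$ is a frame with bounds $A,B$ if and only if $\mathcal{G}(h,\tfrac{p}{q},1)$ is, where $h:=D_{1/b}g$. Since $Z_1h=Z_{1/b}g$, this explains why $Z_{1/b}g$ appears in the definition of $\phi^g_\ell$. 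From now on the lattice is $\tfrac{p}{q}\Z\times\Z$.

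Next I would apply the Zak transform $Z_1\colon L^2(\R)\to L^2(\itvco{0}{1}^2)$ (unitary). Using $\gamma$-periodicity, which gives $Z_1(E_m\psi)(x,\gamma)=\myexp{2\pi i mx}Z_1\psi(x,\gamma)$ for $m\in\Z$, the translation covariance $Z_1(T_y\psi)(x,\gamma)=Z_1\psi(x-y,\gamma)$, and quasi-periodicity in $x$, and writing $k=q\kappa+\ell$ with $\ell\in\{0,\dots,q-1\}$, $\kappa\in\Z$, one gets
\[
Z_1\bigl(E_mT_{\frac{p}{q}k}h\bigr)(x,\gamma)=\myexp{2\pi i mx}\,\myexp{-2\pi i p\kappa\gamma}\,(Z_{1/b}g)\bigl(x-\tfrac{p\ell}{q},\gamma\bigr).
\]
I would then vectorise the frequency variable: writing $\gamma=(\eta+j)/p$ with $j\in\{0,\dots,p-1\}$, $\eta\in\itvco{0}{1}$, the assignment $f\mapsto\Psi_f:=\bigl(Z_1f(x,\tfrac{\eta+j}{p})\bigr)_{j=0}^{p-1}$ is, up to the constant $\sqrt p$, a unitary of $L^2(\R)$ onto $L^2(\itvco{0}{1}^2,\C^p)$; in particular $\norm{f}^2=\tfrac1p\norm{\Psi_f}^2$ and every element of $L^2(\itvco{0}{1}^2,\C^p)$ arises this way.

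Since $\myexp{-2\pi i p\kappa\gamma}=\myexp{-2\pi i\kappa\eta}$ for $\gamma=(\eta+j)/p$, each coefficient $\innerprod{f}{E_mT_{\frac{p}{q}k}h}$ equals $\tfrac1p$ times a Fourier coefficient on the torus $\itvco{0}{1}^2$, indexed by $(m,\kappa)\in\Z^2$, of the scalar function $(x,\eta)\mapsto\innerprod[\C^p]{\Psi_f(x,\eta)}{\phi^g_\ell(x,\eta)}$. Parseval on the torus, summed over $\ell$, yields
\[
\sum_{k,m\in\Z}\abs{\innerprod{f}{E_mT_{\frac{p}{q}k}h}}^2=\frac1p\sum_{\ell=0}^{q-1}\int_{\itvco{0}{1}^2}\bigl|\innerprod[\C^p]{\Psi_f(x,\eta)}{\phi^g_\ell(x,\eta)}\bigr|^2\,dx\,d\eta ,
\]
and here the factor $p^{-1/2}$ built into $\phi^g_\ell$ is precisely what, after cancelling the common $\tfrac1p$ against $\norm{f}^2=\tfrac1p\norm{\Psi_f}^2$, makes the bounds transfer unchanged: $\mathcal{G}(g,a,b)$ is a frame with bounds $A,B$ if and only if
\[
A\norm{\Phi}^2\le\sum_{\ell=0}^{q-1}\int_{\itvco{0}{1}^2}\bigl|\innerprod[\C^p]{\Phi(x,\gamma)}{\phi^g_\ell(x,\gamma)}\bigr|^2\,dx\,d\gamma\le B\norm{\Phi}^2\quad\text{for every }\Phi\in L^2(\itvco{0}{1}^2,\C^p).
\]

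The last step is localisation: this integrated inequality is equivalent to statement (ii), namely that $\{\phi^g_\ell(x,\gamma)\}_{\ell=0}^{q-1}$ is a frame for $\C^p$ with bounds $A,B$ for a.e.\ $(x,\gamma)$. The direction from the pointwise statement to the integrated one is immediate; for the converse I would argue by contradiction, testing with $\Phi=v\,\chi_{E}$, where $E\subset\itvco{0}{1}^2$ is a set of small positive measure on which one of the bounds is violated and $v(x,\gamma)$ is a measurable unit eigenvector field of the fibre frame operator $S(x,\gamma)=\sum_{\ell=0}^{q-1}\phi^g_\ell(x,\gamma)\,\phi^g_\ell(x,\gamma)^{\ast}$ corresponding to its smallest (resp.\ largest) eigenvalue. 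I expect this localisation to be the main obstacle: it relies on the measurability of $\lambda_{\min}(S(\cdot))$ and $\lambda_{\max}(S(\cdot))$ and on a measurable selection of the corresponding eigenvectors (standard, since the entries of $S$ are measurable and finite a.e.\ when $g\in L^2(\R)$, and eigenvalues depend continuously on the entries), and it is the point at which one must check that the \emph{uniform} constants so obtained are exactly $A$ and $B$. The second delicate point is purely bookkeeping: keeping track of the normalising constants (the $\sqrt\lambda$'s from the Zak transforms, the $1/p$ from the change of variables, and the reindexing $k=q\kappa+\ell$) so that no spurious multiple of $A$ or $B$ creeps in.
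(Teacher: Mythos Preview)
The paper does not prove this theorem at all: it is stated as a known result, attributed to Zibulski and Zeevi with a citation to \cite{MR1448221}, and then used as a black box in Section~\ref{sec:obstruc}. So there is no ``paper's own proof'' to compare against.

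Your proposal is essentially the standard proof of the Zibulski--Zeevi characterisation, and the outline is correct: dilate to $b=1$, apply $Z_1$, decompose the frequency variable modulo $1/p$ to land in $L^2(\itvco{0}{1}^2,\C^p)$, use Parseval in $(m,\kappa)$, and then localise via the fibre frame operator. One small bookkeeping point worth double-checking: the paper's definition of $\phi^g_\ell$ uses the second argument $\gamma+\tfrac{k}{p}$ with $(x,\gamma)\in\itvco{0}{1}^2$, whereas your vectorisation uses $(\eta+j)/p$ with $\eta\in\itvco{0}{1}$. These are equivalent because $Z_{1/b}g$ is $1$-periodic in the second variable and the vectors $\phi^g_\ell(x,\gamma)$ and $\phi^g_\ell(x,\gamma+\tfrac{1}{p})$ differ only by a cyclic permutation of components, so the frame bounds on $\C^p$ agree; but you should make the identification explicit so that the ``uniform bounds for a.e.\ $(x,\gamma)\in\itvco{0}{1}^2$'' in (ii) matches your $(x,\eta)$ parametrisation without a stray factor. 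The measurable-selection argument for the localisation step is standard and your description of it is adequate.
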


If $p=1$, i.e., $ab=1/q$, the Gabor system  $\mathcal{G}(g,a,b)$ is
said to be integer oversampled. By
Theorem~\ref{thm:ZZ_singular_values} it is a frame with bounds $A$ and
$B$ if and only if
\begin{align}
\label{eq:int-oversampl}
  A \le \left(\sum_{\ell =0}^{q-1}
    \absbig{Z_{\tfrac{1}{b}}g(x-\ell/q,\gamma)}^2\right)^{1/2} \le B
  \quad \text{for a.e. $x,\gamma \in \itvco{0}{1}^2$.}
\end{align}

If $g\in W(\R)$ is odd and continuous, then, by
Lemma~\ref{lem:Zak-symm}(ii),
$Z_{1/b}g(0,0)=Z_{1/b}g(\tfrac12,0)=0$ for any $b>0$, which by
\eqref{eq:int-oversampl} immediately implies
that $\gaborG{g}$ is not a
frame along the hyperbola $ab=\tfrac12$. Lyubarskii and
Nes~\cite{MR3027914} showed that this assertion extends to any of the
hyperbolas $ab= \tfrac{p}{p+1}$ for $p \in \N$ for any such odd window
function. The results in the remainder of this section show that
the frame property also must fail for certain $(a,b)$-values for
window functions with other symmetries formulated in terms of the
Fourier transform. We denote the new ``failure'' points in $\setprop{(a,b)\in
  \R^2_+}{ab<1}$ by $(a_i,b_i)$, $i=0,1,2,3,4$, where
\begin{equation}
a_i=b_i=\frac{1}{\sqrt{i+2}} \; (i=0,1,2), \quad  a_3=\frac{2}{\sqrt{3}}, b_3=\frac{1}{\sqrt{3}}  \quad a_4=\frac{1}{\sqrt{3}}, b_4=\frac{2}{\sqrt{3}}. \label{eq:non-frame-points}
\end{equation}
\begin{theorem}
\label{thm:4mp2}
Let $g \in H_2\cap W(\R)$. For any point $(a_i,b_i)$, $i \in
\set{0,1,3,4}$, as defined in \eqref{eq:non-frame-points}, the Gabor
system $\gaborG[a_i][b_i]{g}$ is not
 a frame for $L^2(\R)$, in particular, $\gaborG[a_i][b_i]{h_n}$ is not
 a frame for  $n=4m+2$, $m \in \N \cup
 \{0\}$. 
\end{theorem}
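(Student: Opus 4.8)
The plan is to feed the simultaneous Zak-transform zeros from Lemma~\ref{lem:extension-identities}(i) into the Zibulski--Zeevi criterion (Theorem~\ref{thm:ZZ_singular_values}), using throughout that $Z_\lambda g$ is continuous for every $\lambda>0$ when $g\in H_2\cap W(\R)$: indeed such $g$ is continuous and lies in $W(\R)$, as recorded after \eqref{eq:F-of-Zak}. For each of the four admissible indices I will locate one point $(x_0,\gamma_0)\in\itvco{0}{1}^2$ at which the Zibulski--Zeevi data degenerates, and then invoke continuity — a continuous function vanishing at a point is $<A$ on a nonempty open set for every $A>0$, so no a.e.\ lower bound can hold; hence the \emph{lower} frame bound fails and $\gaborG[a_i][b_i]{g}$ is not a frame. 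The ``in particular'' clause is then immediate, since $\hat h_{4m+2}=(-i)^{4m+2}h_{4m+2}=-h_{4m+2}$ and $h_{4m+2}$ decays exponentially, so $h_{4m+2}\in H_2\cap W(\R)$.

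For $i\in\set{0,1}$ we have $a_ib_i=\tfrac{1}{i+2}$, so $\gaborG[a_i][b_i]{g}$ is integer oversampled ($q=i+2$, $p=1$) and criterion~\eqref{eq:int-oversampl} applies with $1/b_i=\sqrt{i+2}$. For $i=0$ I would evaluate the (continuous) left-hand side of \eqref{eq:int-oversampl} at $(x_0,\gamma_0)=(\tfrac{1}{4},\tfrac{1}{2})$: both translates $\tfrac{1}{4}$ and $\tfrac{1}{4}-\tfrac{1}{2}=-\tfrac{1}{4}$ lie in $\tfrac{1}{4}\Z\setminus\Z$ and $\tfrac{1}{2}\in\Z+\tfrac{1}{2}$, so $Z_{\sqrt2}g(\tfrac{1}{4},\tfrac{1}{2})=Z_{\sqrt2}g(-\tfrac{1}{4},\tfrac{1}{2})=0$ by Lemma~\ref{lem:extension-identities}(i). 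For $i=1$ I would use $(x_0,\gamma_0)=(\tfrac{1}{6},\tfrac{1}{2})$: the three translates $\tfrac{1}{6}-\tfrac{\ell}{3}$, $\ell=0,1,2$ — namely $\tfrac{1}{6},-\tfrac{1}{6},-\tfrac{1}{2}$ — all lie in $\tfrac{1}{3}\Z+\tfrac{1}{6}$, so all three summands $Z_{\sqrt3}g(\tfrac{1}{6}-\tfrac{\ell}{3},\tfrac{1}{2})$ vanish. In both cases the left side of \eqref{eq:int-oversampl} is continuous and vanishes at $(x_0,\gamma_0)$, so no $A>0$ can bound it below a.e.

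For $i=3$ we have $a_3b_3=\tfrac{2}{3}=\tfrac{p}{q}$ with $p=2$, $q=3$ and $1/b_3=\sqrt3$, so Theorem~\ref{thm:ZZ_singular_values} applies with the three column vectors $\phi^g_\ell(x,\gamma)=2^{-1/2}\bigl(Z_{\sqrt3}g(x-\tfrac{2\ell}{3},\gamma+\tfrac{k}{2})\bigr)_{k=0}^{1}\in\C^2$, $\ell=0,1,2$. I would again take $(x_0,\gamma_0)=(\tfrac{1}{6},\tfrac{1}{2})$. There the $k=0$ entries are $2^{-1/2}Z_{\sqrt3}g(\tfrac{1}{6}-\tfrac{2\ell}{3},\tfrac{1}{2})$, and since $\tfrac{1}{6}-\tfrac{2\ell}{3}\in\tfrac{1}{3}\Z+\tfrac{1}{6}$ and $\tfrac{1}{2}\in\Z+\tfrac{1}{2}$, Lemma~\ref{lem:extension-identities}(i) forces all three to vanish; i.e.\ the first row of the $2\times3$ Zibulski--Zeevi matrix vanishes at $(x_0,\gamma_0)$, so $\phi^g_0,\phi^g_1,\phi^g_2$ all lie in $\set{0}\times\C$ and cannot span $\C^2$. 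Testing the frame inequality against $v=(1,0)$, the quantity $\sum_{\ell=0}^2\abs{\innerprods{v}{\phi^g_\ell(x,\gamma)}}^2=\tfrac{1}{2}\sum_{\ell=0}^2\abs{Z_{\sqrt3}g(x-\tfrac{2\ell}{3},\gamma)}^2$ is continuous and vanishes at $(x_0,\gamma_0)$, so the uniform lower bound of Theorem~\ref{thm:ZZ_singular_values}(ii) cannot hold and $\gaborG[a_3][b_3]{g}$ is not a frame. Finally $(a_4,b_4)=(b_3,a_3)$, so the reflection symmetry $(a,b)\mapsto(b,a)$ of the frame property for $H_j$-windows (the first lemma of Section~\ref{sec:hermite-functions}) reduces the case $i=4$ to the case $i=3$.

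The one step that is more than bookkeeping is $i=3$: because that point is only rationally, not integer, oversampled, the argument must pass to the full $p\times q$ matrix form of the Zibulski--Zeevi criterion and choose the right degeneracy — making an entire row of the $2\times3$ matrix vanish, so that its rank drops below $p=2$. Lemma~\ref{lem:extension-identities}(i) is exactly what makes this possible; the remaining ingredients (continuity of $Z_\lambda g$, and upgrading a single zero to a positive-measure failure of the lower bound via that continuity) are routine.
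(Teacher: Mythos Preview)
Your proof is correct and follows essentially the same route as the paper: feed the simultaneous Zak-transform zeros from Lemma~\ref{lem:extension-identities}(i) into \eqref{eq:int-oversampl} for $i\in\{0,1\}$, use the rank-drop of the Zibulski--Zeevi matrix for $i=3$, and reduce $i=4$ to $i=3$ via the $(a,b)\leftrightarrow(b,a)$ symmetry for $H_j$-windows. The only differences are cosmetic: the paper picks the evaluation points $(\tfrac{3}{4},\tfrac{1}{2})$ and $(\tfrac{5}{6},\tfrac{1}{2})$ instead of your $(\tfrac{1}{4},\tfrac{1}{2})$ and $(\tfrac{1}{6},\tfrac{1}{2})$ (equivalent by quasi-periodicity), and for $i=3$ it simply says ``from case $i=1$ the matrix has a row of zeros'' where you spell out the test vector $v=(1,0)$ explicitly.
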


\begin{proof}
We consider first the assertion for $i=0$. Note that $a_0 b_0=1/2$, hence $\gaborG[a_0][b_0]{g}$ is an integer
  oversampled Gabor system with $p=1$ and $q=2$. By
  \eqref{eq:h-even-sqrt-2} in Lemma~\ref{lem:extension-identities}(i),
it follows that 
  $Z_{1/b_0}g(x-\ell/q,\gamma)=0$ for $\ell=0,1$ for
  $(x,\gamma)=(\tfrac{3}{4},\tfrac{1}{2})$. Since the Zak transform is
  continuous for  $g \in H_2\cap W(\R)$, we see that the lower bound
  in \eqref{eq:int-oversampl} cannot hold. Thus,
  $\gaborG[a_0][b_0]{g}$ is not a frame.

For the case $i=1$, we have $a_1 b_1=1/3$, hence $p=1$ and $q=3$. By
  \eqref{eq:h-even-sqrt-3} in Lemma~\ref{lem:extension-identities}(i),
  it follows that 
  $Z_{1/b_1}g(x-\ell/q,\gamma)=0$ for $\ell=0,1,2$ for
  $(x,\gamma)=(\tfrac{5}{6},\tfrac{1}{2})$. As before, this violates
  the frame property of $\gaborG[a_1][b_1]{g}$. 

For the case $i=3$, we have $a_3b_3=1/3$, hence $p=2$ and $q=3$. From case
$i=1$, we see that the matrix
$\Phi^g=\set{\phi^g_\ell(x,\gamma)}_{\ell=0}^q$ has a row of zeros. It
follows from  Theorem~\ref{thm:ZZ_singular_values} that
$\gaborG[a_3][b_3]{g}$ is not a frame. 

The assertion for $i=4$ follows from case $i=3$ by symmetry using Lemma~\ref{lem:Zak-symm}.
\end{proof}

\begin{theorem}
\label{thm:4mp3}
Let $g \in H_3\cap W(\R)$. For any  point $(a_i,b_i)$, $i \in
\set{1,2}$, as defined in \eqref{eq:non-frame-points}, the Gabor
system $\gaborG[a_i][b_i]{g}$ is not
 a frame for $L^2(\R)$, in particular, $\gaborG[a_i][b_i]{h_n}$ is not
 a frame for  $n=4m+3$, $m \in \N \cup
 \{0\}$. 
\end{theorem}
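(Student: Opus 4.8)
The plan is to run exactly the argument used for cases $i=0,1$ in the proof of Theorem~\ref{thm:4mp2}, but now drawing the required zeros of the Zak transform from the odd-window identities of Lemma~\ref{lem:extension-identities}(ii) instead of the even-window identities in part~(i). Both points $(a_1,b_1)$ and $(a_2,b_2)$ yield integer-oversampled Gabor systems ($p=1$), so the Zibulski--Zeevi criterion (Theorem~\ref{thm:ZZ_singular_values}) reduces to the sampled Zak-transform condition~\eqref{eq:int-oversampl}, and it suffices to exhibit one point $(x_0,\gamma_0)$ at which all $q$ translates $Z_{1/b}g(x_0-\ell/q,\gamma_0)$, $\ell=0,\dots,q-1$, vanish simultaneously.

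First I would treat $i=1$. Here $a_1b_1=1/3$, so $q=3$ and $1/b_1=\sqrt3$. Lemma~\ref{lem:extension-identities}(ii) with $s=3$ gives $Z_{\sqrt3}g(x,\gamma)=0$ on $\tfrac13\Z\times\Z$, so choosing $(x_0,\gamma_0)=(0,0)$ the translates $x_0-\ell/3=0,-\tfrac13,-\tfrac23$ all lie in $\tfrac13\Z$ and $\gamma_0=0\in\Z$; hence $Z_{1/b_1}g(x_0-\ell/3,0)=0$ for $\ell=0,1,2$. Since $g\in H_3\cap W(\R)$ is continuous, $Z_{1/b_1}g$ is continuous, so the left-hand side of~\eqref{eq:int-oversampl} is a continuous function of $(x,\gamma)$ vanishing at $(0,0)$; it is therefore below any prescribed $A>0$ on a set of positive measure, the lower frame bound fails, and $\gaborG[a_1][b_1]{g}$ is not a frame. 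For $i=2$ I would argue the same way: $a_2=b_2=\tfrac12$ gives $a_2b_2=1/4$, so $q=4$ and $1/b_2=2=\sqrt4$; Lemma~\ref{lem:extension-identities}(ii) with $s=4$ gives $Z_2g=0$ on $\tfrac14\Z\times\Z$, whence at $(x_0,\gamma_0)=(0,0)$ the four translates $-\ell/4$, $\ell=0,1,2,3$, are zeros of $Z_2g(\cdot,0)$, and the same continuity argument again violates the lower bound in~\eqref{eq:int-oversampl}. The statement for Hermite functions is then immediate, since $h_{4m+3}\in H_3$ and $h_{4m+3}\in W(\R)$ by its exponential decay in time and frequency.

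I expect no genuine obstacle: the entire analytic content has already been established in the series identities of Lemma~\ref{lem:h3-identities} and their Zak-transform reformulation in Lemma~\ref{lem:extension-identities}(ii). The only point requiring mild care is the standard passage from ``$Z_{1/b}g$ vanishes at a single point'' to ``the lower frame bound fails a.e.'', which relies on the continuity of $Z_{1/b}g$ for $g\in H_3\cap W(\R)$ recorded in Section~\ref{sec:preliminaries}, so that the infimum of the continuous function in~\eqref{eq:int-oversampl} over shrinking neighbourhoods of the chosen point tends to its value $0$ there.
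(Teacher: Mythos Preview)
Your proposal is correct and follows essentially the same approach as the paper's proof: both cases are integer-oversampled, and you invoke Lemma~\ref{lem:extension-identities}(ii) with $s=3$ and $s=4$ together with continuity of the Zak transform to violate the lower bound in~\eqref{eq:int-oversampl}. The only cosmetic difference is that the paper selects the base points $(x_0,\gamma_0)=(\tfrac{2}{3},0)$ and $(\tfrac{3}{4},0)$ (so that all translates $x_0-\ell/q$ land in $[0,1)$), whereas you take $(0,0)$; by quasi-periodicity of the Zak transform this makes no difference.
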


\begin{proof}
  We consider first the assertion for $i=1$. In this case $a_1b_1=1/3$ and
  $\gaborG[a_1][b_1]{g}$ is an integer oversampled Gabor system with
  $p=1$ and $q=3$. By Lemma~\ref{lem:extension-identities}(ii), 
  it follows that
  $Z_{1/b_1}g(x-\ell/q,\gamma)=0$ for $\ell=0,1,2$ for
  $(x,\gamma)=(\tfrac{2}{3},0)$. As in the proof of
  Theorem~\ref{thm:4mp2}, this shows that $\gaborG[a_1][b_1]{g}$
  cannot be a frame. The proof for $i=2$ we note that $Z_{1/b_2}g(x-\ell/q,\gamma)=0$ for $\ell=0,1,2,3$ for
  $(x,\gamma)=(\tfrac{3}{4},0)$
\end{proof}

Note that it also follows from
Lemma~\ref{lem:extension-identities}(ii) that
$(a_0,b_0), (a_3,b_3)$ and $(a_4,b_4)$ fall outside $\frameset{(g)}$
for $g \in H_3\cap W(\R)$. However, these obstructions are already
known by the results in \cite{MR3027914} since functions in $H_3\cap
W(\R)$ are odd.

From Theorem~\ref{thm:4mp2} we have four obstruction points for the
window class $H_2\cap W(\R)$. Theorem~\ref{thm:4mp3} provides us with
two new obstruction points for the window class $H_3\cap W(\R)$, not
already covered by the hyperbolic obstructions $ab=p/(p+1)$, $p \in
\N$. On the other hand, in general, no obstruction points can exist
for the class $H_0\cap W(\R)$ since it contains the Gaussian $h_0$. If
the conjecture by Lyubarskii and Nes~\cite{MR3027914} holds true, then
there are no general obstructions for the class $H_1\cap W(\R)$ in addition
to $ab=p/(p+1)$, $p \in \N$.

One might ask how badly the Gabor system fails to be a frame in the obstruction points. From the proofs above, it is clear that it is the lower frame bound that fails. In fact, any window in $W(\R)$ satisfy the upper frame bound.  The lower frame bound is a strong condition that is equivalent to injectivity and closedness of the range of the analysis operator $C_{g,a,b}: L^2(\R) \to \ell^2(\Z^2)$ defined by $C_{g,a,b} f=\set{\innerprod{f}{\myexp{2\pi i bm \cdot}g(\cdot-ak)}}_{k,m\in \Z}$. Note that injectivity of $C_{g,a,b}$ is equivalent with the Gabor system $\gaborG{g}$ being complete in $L^2(\R)$. For Hermite windows, Gr\"ochenig, Haimi, and Romero~\cite{GrochenigCompleteness2015} recently showed that, at least, completeness is guaranteed. To be precise, they proved as part of a more general result that, for any $n \in \N$, the system $\gaborG{h_n}$ is complete in $L^2(\R)$ for any rational $ab\le 1$. Hence, for each $(a_i,b_i)$, $i\in \set{0,1,2,3,4}$, given in \eqref{eq:non-frame-points}, the
Gabor system $\gaborG[a_i][b_i]{h_n}$, for $n=4m+2$ or $n=4m+3$, is a
complete Bessel system for which the lower frame bound is not
satisfied because the range of $C_{h_n,a_i,b_i}$ fails to be closed.


Even though both $(1/\sqrt{2},1/\sqrt{2}) \notin \frameset{(g)}$ and
$(1/\sqrt{3},1/\sqrt{3}) \notin \frameset{(g)}$ for $g \in H_2\cap
W(\R)$, no other points of the form $(1/\sqrt{k},1/\sqrt{k})$ can be
obstruction points for the frame property for the window class
$H_2\cap W(\R)$. In fact, by \cite{MR2292280,MR2529475} we know that
$ab<1/(n+1)$ is sufficient for the frame property of 
$\gaborG{h_n}$, hence, in particular, that
$\gaborG{h_2}$ is a frame for $ab<1/3$. Moreover, the obstruction
point $(a_1,b_1)=(1/\sqrt{3},1/\sqrt{3})$ shows that the region $ab<1/3$ is
sharp for $h_2$ in the sense that the smallest constant $c$ such that
$\setprop{(a,b)}{ab<c} \subset \frameset{(h_2)}$ is $c=1/3$. A similar
observation holds for $H_3\cap W(\R)$. In this case, the obstruction
point $(a_2,b_2)=(1/2,1/2)$ shows that the region $ab<1/4$ is sharp
for $h_3$. Since $ab<1$ and $ab<1/2$ are sharp for $h_0$ and $h_1$,
respectively, it is natural to ask if $ab<\tfrac{1}{n+1}$ is sharp for
$h_n$ for all $n\in \N$.


We have here focused on finding
$(a,b)$-values that serve as obstructions of the frame property
simultaneously for an
entire class of window functions. For a specific choice of a Hermite
function $h_n$, $n \ge 2$, one can most likely find many more new
obstructions; this is indeed indicated by the
numerical experiments in the next section.


\section{Numerical experiments}
\label{sec:numer-exper}

The numerical experiments in Matlab below use double precision
floating-point numbers. We
truncate the Hermite functions to obtain compactly supported functions
whenever the function value drops sufficiently low. 
This way a close
approximation to the Zak transform
 $Z_{1/b}h_n$ can be computed as a finite sum. We then discretize the
 Zak transform domain 
on a
 uniform sampling grid, e.g., $51 \times 51$. 
As we only consider integer oversampled Gabor systems, close
approximations to the frame bounds
are easily computed for given values of $a$ and $b$ using the
formula~\eqref{eq:int-oversampl}. The approximated bounds
$A_{\mathrm{apx}}$ and $B_{\mathrm{apx}}$ will (up to machine
precision) be 
larger and smaller, respectively, than the true optimal frame bounds from \eqref{eq:int-oversampl}, i.e.,
$A_{\mathrm{opt}} \le A_{\mathrm{apx}} \le B_{\mathrm{apx}} \le B_{\mathrm{opt}}$. 

\begin{example}
\label{exa:h2}
  Let us first illustrate Theorem~\ref{thm:4mp2} for $h_2$.
  Figure~\ref{fig:plots} shows that the upper and lower frame bound of
  $\mathcal{G}(h_2,a,b)$ along $ab=1/2$ for $b \in
  \itvcc{\tfrac18}{4}$. 
  \begin{figure}[!h]
    \centering \input{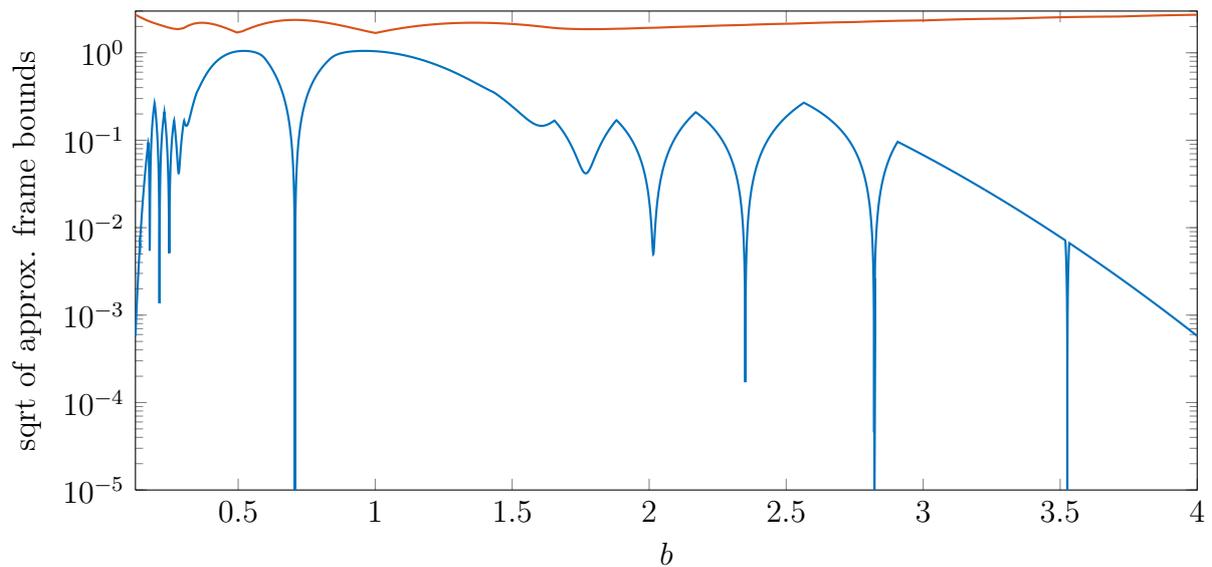}
\caption{Numerical approximations of the upper (red)
  and lower (blue) frame bound for
          $\mathcal{G}(h_2,a,b)$ along $ab=1/2$. At the point
          $(a_0,b_0)=(1/\sqrt{2},1/\sqrt{2})$ the estimate of $\sqrt{A}$
          essentially drops to machine precision $\approx 7 \cdot 10^{-16}$ (not
          shown).
        }
        \label{fig:plots}
      \end{figure}
We first remark that $A_{\mathrm{apx}}^{1/2}$
  drops to machine precision at
  $(a_0,b_0)=(1/\sqrt{2},1/\sqrt{2})$. Note also that the frame bounds
  are symmetric about $b=1/\sqrt{2}$ according to
  Lemma~\ref{lem:Zak-symm}, that is, $\mathcal{G}(h_2,1/(2b),b)$ and
  $\mathcal{G}(h_2,b,1/(2b))$ have the same frame bounds.

      The behavior of ${A_{\mathrm{apx}}^{1/2}}$ is rather
      complicated. The drops of ${A_{\mathrm{apx}}^{1/2}}$ below, say,
      $10^{-3}$, are very narrow and therefore difficult to resolve
      due to the discretization of the $b$ range. Moreover, it is unclear if
      $(a_0,b_0)=(1/\sqrt{2},1/\sqrt{2})$ is the only point along
      $ab=1/2$ that does not belong to $\frameset{(h_2)}$. Around
      $b=2.35$ and $b=2.82$ the values of $A_{\mathrm{apx}}^{1/2}$ are
      in the order of $10^{-4}$ and $10^{-7}$, respectively. At
      $b=3.5261848971734$ the value of $A_{\mathrm{apx}}^{1/2}$ even
      drops to $1.6 \cdot 10^{-12}$, however, it does not drop below
      this value even when the discretization is refined. There may
      very well exist a $(a,b)$-pair near the point
      $(1/(2b),b)$, where $b=3.5261848971734$, for which $\gaborG{h_2}$
      is not a frame. In any event, since $A_{\mathrm{apx}}^{1/2}
      \approx 10^{-12}$, i.e., $A_{\mathrm{apx}} \approx
      10^{-24}$, such a Gabor system is 
      badly conditioned and should not be used for numerical purposes.
    \end{example}


  Let us end this paper with two examples not covered by the results
  in Section~\ref{sec:obstruc}. 

\begin{example}
\label{exa:h4}
In this example we consider Gabor systems generated by $h_4$ and
$h_5$. Note that these functions belong to $H_0$ and $H_1$,
respectively. Recall that no obstructions of the frame property is
known of $h_4$, while the hyperbolas $ab=\tfrac{p}{p+1}$, $p \in \N$,
are the only known obstructions for $h_5$. Figure~\ref{fig:plot-H4-H5}
shows the approximated frame bounds of $\mathcal{G}(h_4,a,b)$ along
$ab=1/2$ and of $\mathcal{G}(h_5,a,b)$ along $ab=1/3$. The general
behavior is similar to that of $h_2$ in Figure~\ref{fig:plots}. For
$\mathcal{G}(h_4,a,b)$ the lower frame bound $A_{\mathrm{apx}}^{1/2}$
drops to machine precision four times in the considered $b$
range. This behavior can be explained as follows. In Maple one can verify with
arbitrary precision that 
\begin{equation}
  Z_{\sqrt[4]{3}} h_4(0,\tfrac{1}{2}) \defeq 3^{1/8} \sum_{k \in \Z}
  (-1)^k h_4(3^{1/4}k) = 0
\label{eq:id-h4}
\end{equation}
holds. Recall that the Zak transform also has a zero at
$(\tfrac12,\tfrac12)$ since $h_4$ is even. Hence,
equation~\eqref{eq:id-h4} implies that the lower bound in
\eqref{eq:int-oversampl} is violated for
$(x,\gamma)=(\tfrac12,\tfrac12)$. Therefore, $\gaborG{h_4}$ is not
frame for $(a,b)=(3^{1/4}/2,3^{-1/4})$, and by
symmetry using Lemma~\ref{lem:Zak-symm}, also not for
$(a,b)=(3^{-1/4},3^{1/4}/2)$. 
Similarly,  one can verify in Maple with
arbitrary precision that 
\begin{equation}
  Z_{\tfrac{1}{\sqrt[4]{3}}} h_4(0,\tfrac{1}{2}) \defeq 3^{-1/8} \sum_{k \in \Z}
  (-1)^k h_4(3^{-1/4}k) = 0
\label{eq:id-h4-2}
\end{equation}
holds. Equation~\eqref{eq:id-h4-2} implies that $\gaborG{h_4}$ is not
frame for $(a,b)=(3^{-1/4}/2,3^{1/4})$, and by
symmetry, also not for
$(a,b)=(3^{1/4},3^{-1/4}/2)$. A proof of the
identities~\eqref{eq:id-h4} and \eqref{eq:id-h4-2} must rely on other
methods that used in Section~\ref{sec:some-infinite-series} since the
Gaussian $h_0$ does not satisfy the identities and since both $h_0$ and
$h_4$ belong to $H_0$.   
 \begin{figure}[!h]
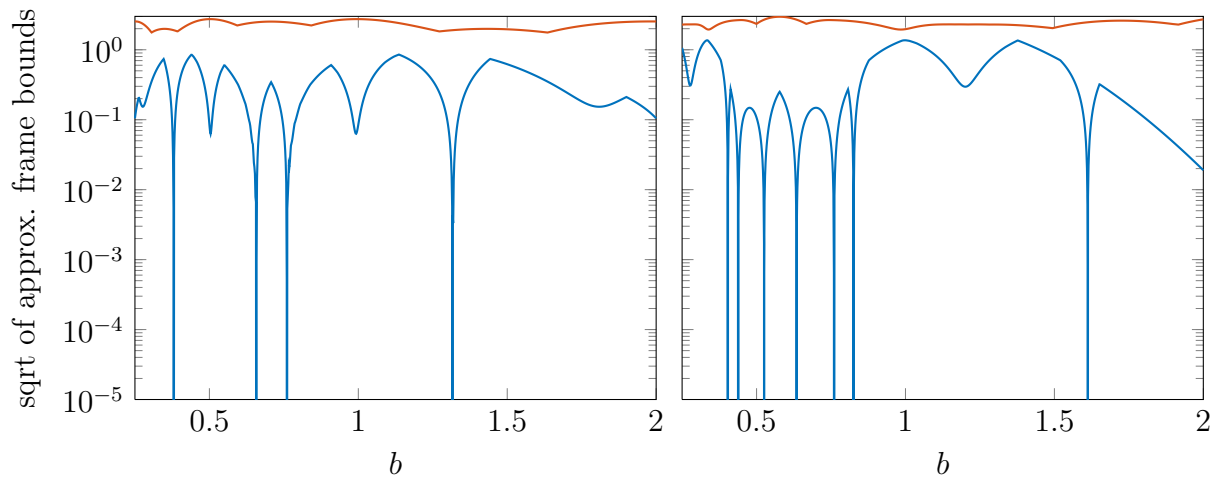

\definecolor{mycolor1}{rgb}{0.00000,0.44700,0.74100}%
\definecolor{mycolor2}{rgb}{0.85000,0.32500,0.09800}%

\begin{minipage}[t]{.5\textwidth}
\centering
 \input{figH4}
    \end{minipage}
\hspace{.2em} 
\phantom{e}
    \begin{minipage}[t]{0.45\textwidth}
        \centering
\input{figH5}
\end{minipage}
\vspace*{-.5em}
\caption{Numerical approximations of the square root of
  the upper (red)
  and lower (blue) frame bound for
          $\mathcal{G}(h_4,a,b)$ along $ab=1/2$ (left) and
          $\mathcal{G}(h_5,a,b)$ along $ab=1/3$ (right). In all
          instances, 
          where $A_{\mathrm{apx}}^{1/2}$ drops below $10^{-5}$, it drops
        to a value in the order of machine precision $\approx 10^{-16}$
        (not shown). }
\label{fig:plot-H4-H5}
 \end{figure}

For the $5$th Hermite function $h_5$ the lower frame bound
$A_{\mathrm{apx}}^{1/2}$ drops to machine precision seven times along
$ab=1/3$ in the considered range in
Figure~\ref{fig:plot-H4-H5}. Here, similar arguments as for $h_4$ can be used
to show that
$(a,b)=(\sqrt[4]{27}/3,1/\sqrt[4]{27})$ and $(a,b)=(1/\sqrt[4]{27},\sqrt[4]{27}/3)$ 
do not belong to  $\frameset{(h_5)}$. 
Indeed, one can verify in Maple with
arbitrary precision that
\[
 Z_{\sqrt[4]{27}}h_5(\tfrac{p}{3},\tfrac12) = 0 \quad \text{for } p\in \set{0,1,2}.
\]
Similar identities that explain the other
five drops of $A_{\mathrm{apx}}^{1/2}$ for
$\mathcal{G}(h_5,a,b)$ most likely exist.
\end{example}

In Example~\ref{exa:h4} above we briefly considered obstructions of
the frame property for Hermite functions
outside the two classes $H_2$ and $H_3$. The
methods developed in this paper for Wiener space functions in the
eigenspaces $H_2$ and $H_3$
rely only on the corresponding eigenvalue of the Fourier transform.
However, since
both $h_0 \in H_0$ and $h_4 \in H_0$ have the same eigenvalue, namely
$1$, it is obvious that other methods are needed if one attempts to
disprove Gr\"ochenig's conjecture for, say, all functions in $\setprop{h_{4m}}{m
  \in \N}$.
  


\def\cprime{$'$} \def\cprime{$'$} \def\cprime{$'$} \def\cprime{$'$}
  \def\uarc#1{\ifmmode{\lineskiplimit=0pt\oalign{$#1$\crcr
  \hidewidth\setbox0=\hbox{\lower1ex\hbox{{\rm\char"15}}}\dp0=0pt
  \box0\hidewidth}}\else{\lineskiplimit=0pt\oalign{#1\crcr
  \hidewidth\setbox0=\hbox{\lower1ex\hbox{{\rm\char"15}}}\dp0=0pt
  \box0\hidewidth}}\relax\fi} \def\cprime{$'$} \def\cprime{$'$}
  \def\cprime{$'$} \def\cprime{$'$} \def\cprime{$'$} \def\cprime{$'$}


\end{document}